\newcolumntype{M}[1]{>{\raggedright}m{#1}}
\DeclareMathAlphabet{\mathpzc}{OT1}{pzc}{m}{it}
\newtheorem{theorem}{Theorem}[section]
\newtheorem{lemma}[theorem]{Lemma}
\theoremstyle{definition}
\newtheorem{definition}[theorem]{Definition}
\theoremstyle{remark}
\newtheorem{remark}[theorem]{Remark}
\numberwithin{equation}{section}
\begin{document}

\title{On Milnor--Orlik's theorem and admissible\\ simultaneous good resolutions}

\author{Christophe Eyral and Mutsuo Oka}

\address{C. Eyral, Institute of Mathematics, Polish Academy of Sciences, ul. \'Sniadeckich 8, 00-656 Warsaw, Poland}  
\email{cheyral@impan.pl} 
\address{M. Oka, Emeritus Professor, Tokyo Institute of Technology, 152-8551 Ohokayama, Meguro-ku, Tokyo, Japan}   
\email{okamutsuo@gmail.com}

\thanks{}

\subjclass[2020]{14B05, 32S05, 32S25, 32S45.}

\keywords{Isolated surface singularity, weighted homogeneous polynomial, monodromy zeta-function, Milnor number,  Newton non-degeneracy/degeneracy, resolution of singularities.}

\begin{abstract}
Let $f$ be a (possibly Newton degenerate) weighted homogeneous polynomial defining an isolated surface singularity at the origin of $\mathbb{C}^3$, and let $\{f_s\}$ be a generic deformation of its coefficients such that $f_s$ is Newton non-degenerate for $s\not=0$. We show that there exists an \emph{``admissible'' simultaneous good resolution of the family of functions $f_s$ for all small $s$}, including $s=0$ which corresponds to the (possibly Newton degenerate) function $f$. 
As an application, we give a new geometrical proof of a weak version of the Milnor--Orlik theorem that asserts that the monodromy zeta-function of $f$ (and hence its Milnor number) is completely determined by its weight, its weighted degree and its Newton boundary.
\end{abstract}

\maketitle

\markboth{Christophe Eyral and Mutsuo Oka}{On Milnor--Orlik's theorem and admissible simultaneous good resolutions}  

%%%%%%%%%%%%%%%%%%%%%%%%%%%%%%%%%%%%%%%%%%%%%%%%%%%%%%%%%%%%%%%%%%%%%%%%%%%%%%%%%%%%%%%%%%%%%
\section{Introduction}\label{intro}
%%%%%%%%%%%%%%%%%%%%%%%%%%%%%%%%%%%%%%%%%%%%%%%%%%%%%%%%%%%%%%%%%%%%%%%%%%%%%%%%%%%%%%%%%%%%%

\subsection{The Milnor--Orlik result}\label{MO-result}
Let $p_1,\ldots,p_n$ be positive integers with $\gcd(p_1,\ldots,p_n)=1$, and let $P={}^t(p_1,\ldots,p_n)$ be the corresponding primitive weight vector. In the space of complex polynomials in $n$ variables $z_1,\ldots,z_n$, consider the subspace $\mathcal{W}(P,d)$ of \emph{weighted homogeneous} polynomials of weighted degree $d$ with respect to $P$ (in short, of $P$-degree $d$) which have an \emph{isolated} critical point at the origin $\mathbf{0}\in\mathbb{C}^n$. 
In \cite{MO}, Milnor and Orlik showed the following theorem.

\begin{theorem}[Milnor--Orlik]\label{mt1}
For any $f\in \mathcal{W}(P,d)$, the monodromy zeta-function $\zeta_{f,\mathbf{0}}(t)$ of $f$ at $\mathbf{0}$ is completely determined by~$P$ and $d$. More precisely, for each $1\leq i\leq n$, write $d/p_i=u_i/v_i$ with $u_i,v_i\in\mathbb{N}$ and $(u_i,v_i)=1$. Then the divisor $\mbox{\emph{div}}(\tilde \zeta_{f,\mathbf{0}}(t))$ of the reduced monodromy zeta-function $\tilde \zeta_{f,\mathbf{0}}(t):=\zeta_{f,\mathbf{0}}(t)\cdot (t-1)$ is given by
\begin{equation*}
\mbox{\emph{div}}(\tilde \zeta_{f,\mathbf{0}}(t))=(-1)^n\prod_{i=1}^n \bigg(\frac{1}{v_i}\Lambda_{u_i}-1\bigg).
\end{equation*}
In particular, the Milnor  number $\mu_{f,\mathbf{0}}$ of $f$ at $\mathbf{0}$ is given by
\begin{equation*}
\mu_{f,\mathbf{0}}=\prod_{i=1}^n \bigg(\frac{d}{p_i}-1\bigg)=\prod_{i=1}^n \bigg(\frac{u_i}{v_i}-1\bigg).
\end{equation*}
\end{theorem}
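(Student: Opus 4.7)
The plan is to prove the theorem by reducing to the Newton non-degenerate case via a generic deformation, and then transporting the result back through an admissible simultaneous good resolution. Since $\mathcal{W}(P,d)$ is finite-dimensional and its Newton non-degenerate locus is Zariski-open, I would first choose a one-parameter deformation $\{f_s\} \subset \mathcal{W}(P,d)$ with $f_0=f$ such that $f_s$ is Newton non-degenerate and has an isolated critical point at $\mathbf{0}$ for all small $s\neq 0$. Every member inherits the weight $P$ and $P$-degree $d$, so the Newton boundary $\Gamma(f_s)$ is contained in the hyperplane $\sum p_i x_i = d$ with $P$ as its supporting normal.

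For $s\neq 0$, I would then compute $\zeta_{f_s,\mathbf{0}}(t)$ from the Newton boundary via Varchenko's formula. Because $\Gamma(f_s)$ sits on a single supporting hyperplane dual to $P$, the face-multiplicities entering Varchenko's formula are controlled directly by the ratios $d/p_i = u_i/v_i$, and the expression telescopes into the claimed product $\prod_{i=1}^n \bigl(\tfrac{1}{v_i}\Lambda_{u_i}-1\bigr)$. The Milnor number formula then follows by evaluating the degree of $\mathrm{div}(\tilde \zeta_{f_s,\mathbf{0}})$.

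To extend the equality to $s=0$, where $f$ is allowed to be Newton degenerate, I would invoke the admissible simultaneous good resolution $\pi\colon \mathcal{X} \to \mathbb{C}^n \times (\mathbb{C},0)$ of the family $\{f_s\}$ provided by the main construction of the paper. By A'Campo's formula, $\zeta_{f_s,\mathbf{0}}(t)$ is recovered from the multiplicities of $\pi_s^{*} f_s$ along the exceptional components and the Euler characteristics of the induced strata in the central fiber. Simultaneity and admissibility ensure that these combinatorial data are locally constant in $s$, yielding
\[
\zeta_{f,\mathbf{0}}(t) = \zeta_{f_0,\mathbf{0}}(t) = \zeta_{f_s,\mathbf{0}}(t) \quad \text{for small } s \neq 0,
\]
which is the asserted formula.

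The main obstacle is the last step: a toric or embedded resolution tailored to $f_s$ generically need not be good for $f_0 = f$, since degenerate faces of $\Gamma(f)$ can cause the fan subdivision required for $f_s$ to over- or under-refine the exceptional locus of $f$. Producing a resolution whose combinatorial output does not jump at $s=0$ is precisely the nontrivial geometric input provided by the paper's construction of admissible simultaneous good resolutions, and it is the reason this strategy delivers a genuinely new geometric proof rather than a reformulation of Milnor and Orlik's original algebraic argument.
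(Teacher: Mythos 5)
Your high-level strategy — deform to the Newton non-degenerate locus, compute there, and transfer back to $s=0$ via a simultaneous good resolution plus A'Campo — does mirror the geometric mechanism at the heart of this paper, and the obstacle you identify at the end (that a resolution adapted to $f_s$ need not stay good at $s=0$) is exactly the point of Theorem~\ref{mp}. However, the argument as written cannot produce Theorem~\ref{mt1}, for two structural reasons. First, Theorem~\ref{mp} (the existence of an admissible simultaneous good resolution) is established in this paper only for $n=3$, whereas Theorem~\ref{mt1} is a statement in arbitrary dimension; the resolution you invoke over $\mathbb{C}^n\times(\mathbb{C},0)$ simply does not exist in the generality you need. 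Indeed, the paper never claims to reprove Theorem~\ref{mt1} by this route: Theorem~\ref{mt1} is cited from Milnor--Orlik, and what the resolution argument yields (Section~\ref{izf}) is only the \emph{weak} version, in three variables, that $\zeta_{f,\mathbf{0}}$ is constant on $\mathcal{W}(P,d,\Gamma)$ with $\Gamma$ fixed. The corresponding argument for the Milnor number sketched in Section~\ref{MO-result} is the uniform-stable-radius homotopy of the normalized Jacobian map, not the resolution.

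Second, the step in which you upgrade from ``constant on $\mathcal{W}(P,d,\Gamma)$'' to the explicit formula $(-1)^n\prod_i(\tfrac{1}{v_i}\Lambda_{u_i}-1)$ is missing. You assert that the face multiplicities in Varchenko's formula are ``controlled directly by $d/p_i=u_i/v_i$'' and that the expression ``telescopes''; this is a genuine combinatorial identity over the Newton boundary that neither you nor the paper verifies, and it must hold uniformly over \emph{every} Newton boundary $\Gamma$ that can occur for elements of $\mathcal{W}(P,d)$ with isolated singularity — not merely the one carried by your chosen $f_s$. Without this step you have only shown (and only in dimension three) that $\zeta_{f,\mathbf{0}}$ depends on $(P,d,\Gamma)$; to eliminate the $\Gamma$-dependence and obtain the closed-form divisor you would either have to carry out this Varchenko computation in full generality, or fall back on the original Milnor--Orlik fibration/orbit-counting argument, which is an entirely different route from the one proposed.
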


As in \cite{MO}, $\Lambda_{u_i}:=1 + \langle \xi_i\rangle + \cdots +\langle \xi_i^{u_i-1}\rangle$  is the divisor of the polynomial $t^{u_i}-1$ as an element of the integral group ring $\mathbb{Z}\mathbb{C}^*$, and $1:=\langle 1\rangle$ is the divisor of the polynomial $t-1$. Here, $\xi_i:=\exp(2\pi \sqrt{-1}/u_i)$.

Now, consider two (not necessarily weighted homogeneous) polynomials $f$ and $g$ with an isolated singularity at $\mathbf{0}$ and with the same Newton boundary $\Gamma$. By a theorem of Kouchnirenko \cite{K}, if both $f$ and $g$ are Newton non-degenerate, then $\mu_{f,\mathbf{0}}=\mu_{g,\mathbf{0}}$. Indeed, under the non-degeneracy assumption, the Kouchnirenko theorem asserts that the Milnor numbers $\mu_{f,\mathbf{0}}$ and $\mu_{g,\mathbf{0}}$ coincide with the Newton numbers $\nu_{f,\mathbf{0}}$ and $\nu_{g,\mathbf{0}}$ of $f$ and $g$ at $\mathbf{0}$, respectively, and since the latter depend only on the Newton boundary $\Gamma$, they must be equal.
On the other hand, if $f$ or $g$ fails to be Newton non-degenerate, then, in general, $\mu_{f,\mathbf{0}}\not=\mu_{g,\mathbf{0}}$. For curves (i.e., $n=2$), this is systematically the case. Indeed, if our polynomials $f$ and $g$ are curves with $f$ Newton degenerate and $g$ Newton non-degenerate, then the theorem of Kouchnirenko shows that $\mu_{g,\mathbf{0}}=\nu_{g,\mathbf{0}}=\nu_{f,\mathbf{0}}$. However, by \cite{P,GBLP}, the Newton degeneracy of $f$ implies that $\nu_{f,\mathbf{0}}<\mu_{g,\mathbf{0}}$. (Note that a weighted homogeneous curve with an isolated singularity at $\mathbf{0}$ cannot be Newton degenerate.)

The last part of the Milnor--Orlik theorem says that if, in addition for our polynomials $f$ and $g$ to have an isolated singularity at $\mathbf{0}$ and the same Newton boundary, we also assume that they are weighted homogeneous for a given weight $P$ and a given $P$-degree $d$, then this change in Milnor number cannot occur. Indeed, consider a generic deformation $\{f_s\}$ of the coefficients of $f$ such that $f_s$ is Newton non-degenerate for $s\not=0$. Then the $f_s$'s are weighted homogeneous of $P$-degree $d$ and they have an isolated singularity at $0$, so that the family $\{f_s\}$ has a \emph{uniform stable radius} for the Milnor fibrations of the $f_s$'s (see \cite{O4}). Let us denote this radius by $\varepsilon_0$. Then the family of normalized Jacobian mappings
\begin{equation*}%\label{njm}
df_s/\Vert df_s\Vert\colon
\mathbf{z}=(z_1,\ldots,z_n)\mapsto
(\partial_{z_1}f_s(\mathbf{z}),\ldots,\partial_{z_n}f_s(\mathbf{z})) / \Vert (\partial_{z_1}f_s(\mathbf{z}),\ldots,\partial_{z_n}f_s(\mathbf{z})) \Vert,
\end{equation*}
from the sphere $\mathbb{S}_{\varepsilon_0}^{2n-1}$ with radius $\varepsilon_0$ and centre $\mathbf{0}$ to the unit sphere of $\mathbb{C}^n$, induces a homotopy between $df_s/\Vert df_s\Vert$ and $df_0/\Vert df_0\Vert$,
so that these mappings have the same degree, that is, 
\begin{equation*}
\mu_{f_s,\mathbf{0}}=\mu_{f_0,\mathbf{0}}=\mu_{f,\mathbf{0}}
\end{equation*}
 (see \cite[\S 2]{MO} and \cite[\S 7]{M}). The same argument applied to $g$ leads to a family $\{g_s\}$ such that $\mu_{g,\mathbf{0}}=\mu_{g_s,\mathbf{0}}$, and since $f_s$ and $g_s$ have the same Newton boundary $\Gamma$, they have the same Milnor number by Kouchnirenko's theorem. Altogether, $\mu_{f,\mathbf{0}}=\mu_{g,\mathbf{0}}$.

The original motivation of the present paper was to understand the geometry governing this  special feature of isolated singularities of weighted homogeneous hypersurfaces with a given fixed Newton boundary. 

\subsection{Existence of an admissible simultaneous good resolution}
To simplify, we shall only consider polynomials in three variables $z_1,z_2,z_3$.
To investigate the above problem, we show the following theorem about the existence of an ``admissible'' simultaneous good resolution. Let $f\in \mathcal{W}(P,d)$ be a (possibly Newton degenerate) weighted homogeneous polynomial defining an isolated surface singularity at the origin of $\mathbb{C}^3$, and let $\{f_s\}$ be a generic deformation of its coefficients such that $f_s$ is Newton non-degenerate for $s\not=0$. We prove that there exists a regular simplicial cone subdivision of the dual Newton diagram $\Gamma^*(f)$ of $f$ (so-called \emph{admissible subdivision}) such that the corresponding toric modification gives a simultaneous good resolution of the family of functions $f_s$ for all small $s$, including $s=0$ which corresponds to the (possibly Newton degenerate) function $f$ (see Theorem \ref{mp}). We shall call such a resolution an \emph{admissible simultaneous good resolution}.

As an application, we give a new geometrical proof of a weak version of the Milnor--Orlik theorem. More precisely, let $\mathcal{W}(P,d,\Gamma)$ be the subset of $\mathcal{W}(P,d)$ consisting of those polynomials in $\mathcal{W}(P,d)$ whose Newton boundary is $\Gamma$. We show that by combining Theorem \ref{mp} with the famous formula for the monodromy zeta-function of A'Campo \cite[Th\'eor\`eme~3]{A}, the monodromy zeta-function of an element $f\in \mathcal{W}(P,d,\Gamma)$ (and hence its Milnor number) is completely determined by $P$, $d$  and $\Gamma$ (see section~\ref{izf}). 
This new approach of Milnor--Orlik's result is certainly more complicated than the original one by Milnor and Orlik. However, it seems interesting to us because it allows to highlight the geometrical reason why the correspondence $f\mapsto \zeta_{f,\mathbf{0}}(t)$ remains constant on $\mathcal{W}(P,d,\Gamma)$, even if $f$ gets Newton degenerate on some faces of $\Gamma$ \textemdash\ a property which, let us remember, fails if we replace $\mathcal{W}(P,d,\Gamma)$ by the set of polynomials having an isolated critical point at~$\mathbf{0}$ and a given fixed Newton boundary $\Gamma$ but which are not weighted homogeneous.

\subsection{Known related results}
Before getting into the heart of the paper, let us briefly mention some known related results.
The resolution of singularities of complex analytic spaces dates back to Hironaka's founding work \cite{Hironaka}. In \cite{Teissier}, Teissier introduced various notions of simultaneous resolutions (weak, strong, etc) for deformation-families of analytic spaces, and studied the link between such resolutions and certain numerical invariants associated with the family members. For example, he showed that if a family of isolated hypersurface singularities admits a ``strong simultaneous resolution'', then it satisfies the Whitney conditions, that is, by \cite{BS,T}, it is $\mu^*$-constant. This study was continued and developed further by Laufer \cite{L,L2} (see also Shepherd-Barron \cite{S}) who proved theorems giving sufficient conditions for a family of isolated surface singularities to have a simultaneous resolution. For example, Laufer proved that in the case of surfaces, the converse of the above mentioned theorem of Teissier holds true, namely if a family of isolated surface singularities is $\mu^*$-constant, then it admits a strong simultaneous resolution. In \cite{O2}, the second named author showed that a $\mu$-constant family $\{f_s\}$ of Newton non-degenerate surface singularities which induces a ``negligible truncation'' of the Newton boundary of $f_0$ always admits a weak simultaneous resolution.
More recently, in \cite{LMS}, Leyton-Álvarez, Mourtada and Spivakovsky showed that a family of Newton non-degenerate hypersurface singularities admits a simultaneous embedded resolution if and only if it is $\mu$-constant.
Finally, in \cite{FP}, Fukui and P\u{a}unescu gave a result closely related to ours (also allowing Newton degenerate functions) with a different approach based on ``modified analytic trivializations''.

%%%%%%%%%%%%%%%%%%%%%%%%%%%%%%%%%%%%%%%%%%%%%%%%%%%%%%%%%%%%%%%%%%%%%%%%%%%%%%%%%%%%%%%%%%%%%
\section{Existence of an admissible simultaneous good resolution}\label{ESR}
%%%%%%%%%%%%%%%%%%%%%%%%%%%%%%%%%%%%%%%%%%%%%%%%%%%%%%%%%%%%%%%%%%%%%%%%%%%%%%%%%%%%%%%%%%%%%

Assume that $n=3$ and take $f \in \mathcal{W}(P,d)$. Write $f(\mathbf{z})=\sum_{\alpha} c_{\alpha}\, \mathbf{z}^\alpha$, where $\mathbf{z}:=(z_1,z_2,z_3)$, $c_\alpha\in\mathbb{C}$, $\alpha:=(\alpha_1,\alpha_2,\alpha_3)\in\mathbb{N}^3$ and $\mathbf{z}^\alpha:=z_1^{\alpha_1} z_2^{\alpha_2} z_3^{\alpha_3}$. For any subset $I\subseteq \{1,2,3\}$, let $f^I$ denote the restriction of $f$ to the coordinate subspace $\mathbb{C}^I$ defined by $z_i=0$ if $i\notin I$. 
Let $\Gamma_{\! +}(f)$ be the Newton polyhedron of $f$, and let $\Gamma(f)$ be its Newton boundary.
For any weight vector $Q={^t}(q_1,q_2,q_3)$, let $\Delta(Q,f)$ be the face of $\Gamma(f)$ defined by
\begin{equation*}%\label{defdqelq}
\Delta(Q,f):=\Big\{\alpha\in\Gamma_{\! +}(f)\, ;\, \sum_{i=1}^3 \alpha_i q_i=d(Q,f)\Big\}, 
\end{equation*} 
where $d(Q,f)$ is the minimal value of the linear map $\alpha\in\Gamma_{\! +}(f)\mapsto \sum_{i=1}^3 \alpha_i q_i$.
When there is no ambiguity, we write $\Delta(Q)$ and $d(Q)$ instead of $\Delta(Q,f)$ and $d(Q,f)$.

Now, let $U$ denotes a small open neighbourhood of $0\in\mathbb{C}$, and consider a deformation $\{f_s\}_{s\in U}$ of $f$ (i.e., $f_0=f$) obtained from a small analytic deformation of the coefficients of $f$ such that $f_s$ is Newton non-degenerate for $s\not=0$. Put $F(s,\mathbf{z}):=f_s(\mathbf{z})$, $V(F):=F^{-1}(0)\subseteq \mathbb{C}\times \mathbb{C}^3$, and consider a proper map $\pi\colon X\to\mathbb{C}^3$ from an $3$-dimensional (complex) analytic manifold $X$ to $\mathbb{C}^3$. Finally, write $\Pi:=\mbox{id}\times \pi\colon U\times X\to U\times \mathbb{C}^3$.

\begin{definition}\label{def-ugrn}
We say that $\pi\colon X\to\mathbb{C}^3$ is a \emph{simultaneous good resolution of the family of functions $f_s$ for all small $s$} if there exists an open disc $D_\varepsilon(0)\subseteq U$ at the origin $0\in\mathbb{C}$ with radius $\varepsilon>0$ such that the following three conditions hold true:
\begin{enumerate}
\item
the restristed map $\Pi\colon (D_\varepsilon(0)\times X)\setminus \Pi^{-1}(V(F))\to (D_\varepsilon(0)\times\mathbb{C}^3)\setminus V(F)$ is a biholomorphism;
\item
the strict transform $\widetilde{V}(F):=\overline{\Pi^{-1}(V(F)\setminus (\mathbb{C}\times \{0\}))}$ is non-singular;
\item
for any point $(s,x)\in \Pi^{-1}(V(F))$, there exists a system of coordinates at $(s,x)$ such that the pull-back $\Pi^*F:=F\circ\Pi$ is given by a monomial in these coordinates (i.e., $\Pi^{-1}(V(F))$ has only normal crossing singularities).
\end{enumerate}
\end{definition}

These conditions are equivalent to saying that $\Pi$ is a good resolution of the function $F$ in the sense of \cite[Definition (5.1.1)]{O3}.
In particular, this implies that the restricted map $\Pi\colon \{s\}\times X\to \{s\}\times\mathbb{C}^3$ is a good resolution of the function $f_s$ for any $s\in D_\varepsilon(0)$.

\begin{theorem}\label{mp}
There exists a regular simplicial cone subdivision $\Sigma^*$ of the dual Newton diagram $\Gamma^*(f)$ of $f$ such that the corresponding toric modification $\pi\colon X\to\mathbb{C}^3$ gives a simultaneous good resolution of the family of functions $f_s$ for all small $s$.
\end{theorem}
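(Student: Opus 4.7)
The plan is to take $\Sigma^*$ to be a regular simplicial subdivision of the finer diagram $\Gamma^*(f_s)$ for generic $s\not=0$ (which is then automatically a subdivision of $\Gamma^*(f)$), and to verify the three conditions of Definition~\ref{def-ugrn} chart by chart, using the family parameter $s$ to compensate for possible Newton degeneracies of $f$ at $s=0$.

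First I would set up the subdivision. Since $f_s\in\mathcal{W}(P,d)$ and, for generic $s\not=0$, every weighted homogeneous monomial of $P$-degree $d$ occurs in $f_s$ with non-zero coefficient, the Newton polyhedron $\Gamma_{\!+}(f_s)$ equals a fixed maximal polyhedron depending only on $(P,d)$, and $\Gamma_{\!+}(f)\subseteq\Gamma_{\!+}(f_s)$. Hence $\Gamma^*(f_s)$ refines $\Gamma^*(f)$, and any regular simplicial subdivision $\Sigma^*$ of $\Gamma^*(f_s)$ serves as the required subdivision of $\Gamma^*(f)$. Let $\pi\colon X\to\mathbb{C}^3$ be the associated toric modification. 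For each top-dimensional cone $\sigma=\mathrm{Cone}(Q_1,Q_2,Q_3)\in\Sigma^*$ with local coordinates $\mathbf{y}=(y_1,y_2,y_3)$, the substitution $z_j=\prod_i y_i^{Q_{ij}}$ gives
\begin{equation*}
\pi_\sigma^*F(s,\mathbf{y})=\mathbf{y}^{D_\sigma}\cdot\tilde F_\sigma(s,\mathbf{y}),
\end{equation*}
where $D_\sigma=(d(Q_1,f_s),d(Q_2,f_s),d(Q_3,f_s))$. The crucial observation is that, for generic $s$, this exponent vector depends only on $(P,d,\sigma)$ and is therefore \emph{constant in $s$}, so the factoring extends uniformly to $s=0$. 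For $s\not=0$, Newton non-degeneracy of $f_s$ together with Oka's classical toric resolution theorem immediately give smoothness of $\{\tilde F_\sigma(s,\cdot)=0\}$ and its transverse intersection with each $\{y_i=0\}$.

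The main obstacle is the behaviour at $s=0$, where $f$ may be Newton degenerate on some face $\Delta(Q_i,f)$. There, the restricted face polynomial $\tilde F_\sigma(0,\mathbf{y})|_{y_i=0}$ may vanish together with its $\mathbf{y}$-partial derivatives at some point of the exceptional torus, so that the slice $\{0\}\times V(\tilde F_\sigma(0,\cdot))$ is singular. The rescue comes from the deformation parameter itself: at any such degenerate point $(0,\mathbf{y}^{(0)})$, I would show that $\partial_s\tilde F_\sigma(0,\mathbf{y}^{(0)})\not=0$ by identifying this quantity with the toric pull-back of an ``$s$-derivative face polynomial'' assembled from the coefficients $c_\alpha'(0)$ of the monomials that are added to make $f_s$ Newton non-degenerate. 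Since the perturbation is generic, this polynomial is non-vanishing at the critical points of the $f$-face polynomial, which gives smoothness of $\widetilde{V}(F)$ as a hypersurface in $D_\varepsilon(0)\times X$ and, applied to the restrictions to each $\{y_i=0\}$, transversality with each coordinate divisor. The biholomorphism condition is automatic for a birational toric map, and the finiteness of $\Sigma^*$ yields a uniform radius $\varepsilon>0$.
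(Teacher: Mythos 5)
Your proposal departs from the paper's argument at two decisive points, and the second is a genuine gap that would stop the proof from going through.

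\textbf{On the choice of $\Sigma^*$.} You assume the generic deformation $f_s$ enlarges the Newton polyhedron, so that $\Gamma^*(f_s)$ strictly refines $\Gamma^*(f)$ and any admissible subdivision of the former serves. But the deformation in this paper only perturbs the coefficients of $f$ while \emph{keeping $\Gamma(f_s)=\Gamma(f)$ and hence $\Gamma^*(f_s)=\Gamma^*(f)$ for all $s$} (this is stated explicitly after Theorem~\ref{mp}). Consequently $\Gamma_{\!+}(f_s)$ is \emph{not} a ``maximal polyhedron depending only on $(P,d)$'', and the two dual Newton diagrams do not need to be reconciled. The real subtlety in choosing $\Sigma^*$ is elsewhere: one must ensure the skeleton cone $\mathrm{Cone}(P,e_1)$ over the degenerate edge $\Delta(e_1)$ is regular, i.e., that $\gcd(p_2,p_3)=1$. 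This is Lemma~\ref{lem-det}, and it is \emph{not} automatic: the paper deduces it from the hypothesis that $f$ has an isolated singularity at $\mathbf{0}$. Your proposal never verifies this regularity, yet without it the subdivision cannot be arranged so that the two extreme cones $\sigma=\mathrm{Cone}(P,e_1,R)$ and $\sigma'=\mathrm{Cone}(P,e_1,R')$ have the clean form used in the local computation.

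\textbf{On smoothness at the degenerate points.} You propose to rescue smoothness at a bad point $(0,\mathbf{y}^{(0)})$ by showing $\partial_s\tilde F_\sigma(0,\mathbf{y}^{(0)})\ne 0$. This is the wrong derivative, and for two reasons. First, a nonzero $s$-derivative only gives smoothness of $\widetilde{V}(F)$ as a hypersurface of $D_\varepsilon(0)\times X$; it leaves open the possibility that $\widetilde{V}(F)$ is \emph{tangent to the slice} $\{0\}\times X$, in which case the fibre $\widetilde{V}(f_0)$ is singular and $\Pi$ restricted to $\{0\}\times X$ is not a good resolution of $f_0$ --- precisely the conclusion the theorem needs (cf.\ the remark after Definition~\ref{def-ugrn} and the use of Remark~\ref{rem-ltf} in Lemma~\ref{conttmzf}). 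Second, the genericity claim ``the $s$-derivative face polynomial is nonvanishing at the critical points'' is unverified and, under the paper's hypotheses (deformation of existing coefficients, Newton boundary preserved), it is not clear it holds. What the paper actually proves (Lemma~\ref{rem}) is that the $\mathbf{y}$-derivative $\partial\tilde F/\partial y_2$ is nonzero near $(0,0,\lambda_i)$ whenever $k_i\ge 2$. Crucially, this is forced by the isolated singularity of $f=f_0$ itself, via the identity $0\ne\partial_{z_1}f_0(0,\lambda_i^{1/p_3}u^{p_2},u^{p_3})=g_{0,1}(\lambda_i^{1/p_3}u^{p_2},u^{p_3})$, i.e., $h_{0,1}(\lambda_i)\ne 0$. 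This simultaneously yields smoothness of $\widetilde{V}(F)$, smoothness of every slice $\widetilde{V}(f_s)$ including $s=0$, and transversality to the exceptional divisors --- with the tangency permitted only to the non-exceptional divisor $\hat E(e_1)$. Your $\partial_s$ argument does not deliver this package, so the proof does not close.
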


Note that, by our definition of the $f_s$'s, for each $s$ the Newton boundary $\Gamma(f_s)$ and the dual Newton diagram $\Gamma^*(f_s)$ of $f_s$ coincide with $\Gamma(f)$ and $\Gamma^*(f)$, respectively.

For the definitions of regular simplicial cone subdivisions, dual Newton diagram and toric modification, we refer the reader to \cite[Chap.~II, \S 1 and Chap.~III, \S 3]{O3}. A regular simplicial cone subdivision of the dual Newton diagram $\Gamma^*(f)$ is called an \emph{admissible} subdivision (see \cite[Chap.~III, Definition (3.1.1)]{O3}). We shall call the simultaneous good resolution given by Theorem \ref{mp} an \emph{admisible} simultaneous good resolution.

In \cite{FP}, Fukui and P\u{a}unescu gave a closely related result (also allowing Newton degenerate functions) with a completely different approach based on so-called ``modified analytic trivialization''. In general, in this approach, the subdivision is not admissible.
In the special case where $f$ is Newton non-degenerate, Theorem \ref{mp} was first proved in \cite[Chap.~III, Theorem (3.4)]{O3} without assuming that $f$ is weighted homogeneous.

%%%%%%%%%%%%%%%%%%%%%%%%%%%%%%%%%%%%%%%%%%%%%%%%%%%%%%%%%%%%%%%%%%%%%%%%%%%%%%%%%%%%%%%%%%%%%
\section{Proof of Theorem \ref{mp}}\label{proofofmt}
%%%%%%%%%%%%%%%%%%%%%%%%%%%%%%%%%%%%%%%%%%%%%%%%%%%%%%%%%%%%%%%%%%%%%%%%%%%%%%%%%%%%%%%%%%%%%

The Newton non-degenerate case being already treated in \cite[Chap.~III, Theorem (3.4)]{O3}, we only have to consider the situation where $f$ is Newton degenerate.
As $f$ is weighted homogeneous with respect to $P$, we have $\Gamma(f)=\Delta(P)$. Moreover, since $f$ has an isolated singularity at $\mathbf{0}$, the Newton boundary $\Gamma(f)$ meets all the coordinate $2$-planes. If $f$ is Newton degenerate on an 1-dimensional face $\Delta$ of $\Gamma(f)$, then, necessarily, $\Delta$ is contained in a coordinate $2$-plane.
Let us assume, for instance, that $f$ is Newton degenerate on the $1$-dimensional face $\Gamma(f)\cap\mathbb{R}^I=\Gamma(f^I)$ with $I:=\{2,3\}$. (Here, $\mathbb{R}^I$ is defined in a similar way as $\mathbb{C}^I$.) Note that $\Gamma(f^I)=\Delta(e_1)$, where $e_1:={}^t(1,0,0)$.
To simplify, we assume that $f$ is Newton non-degenerate on all the other $1$-dimensional faces (the argument is similar when $f$ degenerates on several edges).

We start with the choice of an admissible subdivision, that is, a regular simplicial cone subdivision of the dual Newton diagram $\Gamma^*(f)$. Here, let us recall that a simplicial cone generated by two primitive integral vectors $Q_1$ and $Q_2$ (hereafter denoted by $\mbox{Cone}(Q_1,Q_2)$) is called \emph{regular} if $\{Q_1,Q_2\}$ can be extended to an (integral) basis $\{Q_1,Q_2,Q_3\}$ of the lattice $\mathbb{Z}^3$. This is equivalent to saying that $\det(Q_1,Q_2)=1$ (see \cite[Chap.~II, \S 1]{O3}).

\begin{lemma}\label{lem-det}
$\mbox{\emph{Cone}}(P,e_1)$ is a regular simplicial cone of $\Gamma^*(f)$, that is, $\gcd(p_2,p_3)=1$.
\end{lemma}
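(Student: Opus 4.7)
The plan is to translate the regularity claim into the arithmetic assertion $\gcd(p_2,p_3)=1$ and then derive the latter by contradiction from the isolated-singularity and Newton-degeneracy hypotheses.

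For the translation, I would write the $3\times 2$ matrix with columns $P$ and $e_1$ and compute its three $2\times 2$ minors, which are $-p_2$, $-p_3$ and $0$. The cone $\mathrm{Cone}(P,e_1)$ is regular iff $\{P,e_1\}$ extends to a $\mathbb{Z}^3$-basis, which happens iff the $\gcd$ of these minors equals $1$, i.e.\ iff $\gcd(p_2,p_3)=1$.

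To prove the latter, I would suppose for contradiction that $g:=\gcd(p_2,p_3)\geq 2$. Primitivity of $P$ (namely $\gcd(p_1,p_2,p_3)=1$) forces $\gcd(g,p_1)=1$. Since $\Gamma(f)$ meets the coordinate plane $\mathbb{R}^I$, the restriction $f^I$ is nonzero, and any of its monomials $z_2^{\alpha_2}z_3^{\alpha_3}$ satisfies $p_2\alpha_2+p_3\alpha_3=d$, whence $g\mid d$. Then, for every monomial $z_1^{\alpha_1}z_2^{\alpha_2}z_3^{\alpha_3}$ of $f$, reducing the weight relation $\alpha_1 p_1+\alpha_2 p_2+\alpha_3 p_3=d$ modulo $g$ yields $g\mid \alpha_1 p_1$, and hence $g\mid \alpha_1$ since $\gcd(g,p_1)=1$. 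Consequently $f$ is a polynomial in $z_1^g, z_2, z_3$, and this forces $\partial_{z_1}f\equiv 0$ on the plane $\{z_1=0\}$ (because $g\geq 2$).

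Now I would invoke the Newton-degeneracy hypothesis on the edge $\Gamma(f^I)$ set up just before the lemma: there exists $(z_2^*,z_3^*)\in(\mathbb{C}^*)^2$ with $\partial_{z_2}f^I(z_2^*,z_3^*)=\partial_{z_3}f^I(z_2^*,z_3^*)=0$. By the weighted homogeneity of $f^I$, the whole $\mathbb{C}^*$-orbit $\{(\lambda^{p_2}z_2^*,\lambda^{p_3}z_3^*):\lambda\in\mathbb{C}^*\}$ consists of critical points of $f^I$. Since $\partial_{z_i}f(0,z_2,z_3)=\partial_{z_i}f^I(z_2,z_3)$ for $i=2,3$ and $\partial_{z_1}f(0,z_2,z_3)\equiv 0$ by the previous step, this orbit lifts to a one-dimensional critical curve of $f$ inside $\{z_1=0\}$ accumulating at $\mathbf{0}$, contradicting the isolated singularity of $f$ at $\mathbf{0}$. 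Therefore $g=1$, as required. The main obstacle is the arithmetic step showing $g\mid\alpha_1$ for every monomial of $f$: this is the ingredient that converts the hypothesis $\gcd(p_2,p_3)>1$ into a \emph{structural} statement about $f$ (polynomial dependence on $z_1^g$) and thereby bridges the gap between the Newton degeneracy on the edge $\Gamma(f^I)$ and a non-isolated critical set in the ambient $\mathbb{C}^3$; the remaining steps are essentially bookkeeping.
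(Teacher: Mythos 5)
Your proposal is correct and follows essentially the same route as the paper: assume $\gcd(p_2,p_3)=r\geq 2$, use primitivity of $P$ and the relation $r\mid d$ to kill the $z_1$-linear part of $f$ (you prove the slightly stronger fact that $r\mid\alpha_1$ for every monomial, whereas the paper only needs $\alpha_1\neq 1$), and then lift a $1$-parameter family of critical points of $f^I$, furnished by Newton degeneracy on the edge $\Delta(e_1)$, to a critical curve of $f$ inside $\{z_1=0\}$, contradicting the isolatedness of the singularity. The explicit check via $2\times 2$ minors that regularity of $\mathrm{Cone}(P,e_1)$ is equivalent to $\gcd(p_2,p_3)=1$ is a nice detail the paper states without proof.
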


\begin{proof}
As $\Delta(e_1)\subseteq\Delta(P)$, $\mbox{Cone}(P,e_1)$ is a (simplicial) cone of $\Gamma^*(f)$. We will show that $r:=\gcd(p_2,p_3)=1$. We argue by contradiction: suppose that $r\geq 2$. Then, since $P$ is primitive, we have $\gcd(p_1,r)=1$. As $d=d(P,f)=d(P,f^{I})$ and $d(P,f^{I})\equiv 0 \mbox{ mod } r$, the relation $\gcd(p_1,r)=1$ implies that $f$ has no monomial of the form $z_1z_2^{\alpha_2}z_3^{\alpha_3}$. Therefore there exists a polynomial $g$ such that $f$ is written as $f=f^{I}+z_1^2\, g$.
From this expression, we deduce that if $(z_{2},z_{3})$ is a critical point of $f^{I}$, then $(0,z_{2},z_{3})$ is a critical point of $f$. Now, factorize $f^{I}(0,z_2,z_3)$ as
\begin{equation}\label{facfzi}
f^{I}(0,z_2,z_3) = c\, z_2^az_3^b\prod_{i=1}^{\ell}(z_2^{p_3}-\lambda_i\, z_3^{p_2})^{k_i},
\end{equation}
where $c$ is a coefficient and $\lambda_1,\ldots,\lambda_\ell\in\mathbb{C}^*$ are mutually distinct. 
As $f^{I}$ is Newton degenerate, there exists at least one index $i$ with $k_i\geq 2$. For such an index $i$, the point $(\lambda_i^{1/p_3}u^{p_2}, u^{p_3})$ is a critical point of $f^I$ for any $u\in\mathbb{C}^*$. It follows that $(0,\lambda_i^{1/p_3}u^{p_2}, u^{p_3})$ is a critical point of $f$ for any $u\in\mathbb{C}^*$, and hence $f$ does not have an isolated singularity at~$\mathbf{0}$ \textemdash\ a contradiction.
\end{proof}

Combined with the \emph{subdivision principle} (see \cite[Chap.~II, Lemmas (2.1), (2.5) and (2.6)]{O3}), Lemma \ref{lem-det} shows that there exists a regular simplicial cone subdivision $\Sigma^*$ of $\Gamma^*(f)$ such that:
\begin{enumerate}
\item
$\mbox{Cone}(P,e_1)$ is a (regular) cone of $\Sigma^*$;
\item
there are two unique (regular) cones $\sigma:=\mbox{Cone}(P,e_1,R)$ and $\sigma':=\mbox{Cone}(P,e_1,R')$ of $\Sigma^*$ such that $\Delta(R)$ and $\Delta(R')$ are the endpoints of the $1$-dimensional face $\Delta(e_1)$. Let us assume, for instance, that $\Delta(R)=(0,a,b+p_2k)$ and $\Delta(R')=(0,a+p_3k,b)$, where $k:=\sum_{i=1}^{\ell}k_i$ and $a,b,k_i$ are the exponents that appear in \eqref{facfzi}.
\end{enumerate}
Let $\pi\colon X\to\mathbb{C}^3$ be the toric modification associated with such a subdivision $\Sigma^*$, and let $\mathbb{C}^3_\sigma$ and $\mathbb{C}^3_{\sigma'}$ be the toric charts of $X$ associated with the cones $\sigma$ and $\sigma'$, respectively.
Since the situation in these charts are similar, it suffices to consider one of them. Let us consider, for instance, the chart $\mathbb{C}^3_\sigma$.
As a matrix, $\sigma$ takes the form
\begin{equation*}
\sigma=
\begin{pmatrix}
p_1 & 1 & r_1\\
p_2 & 0 & r_2\\
p_3 & 0 & r_3
\end{pmatrix},
\end{equation*}
where ${^t(r_1,r_2,r_3)}:=R$. Since this matrix is regular, we have $\pm 1=\det(\sigma)=p_3r_2-p_2r_3$, and since $R$ takes its minimal value on $\Delta(R)=(0,a,b+p_2k)$, in fact we have $p_3r_2-p_2r_3=1$.
Expand $f_s(z_1,z_2,z_3)$ with respect to the variable $z_1$ as
\begin{equation}\label{dg1-bis}
f_s(z_1,z_2,z_3) = \sum_{j=0}^{[d/p_1]}z_1^j\, g_{s,j}(z_2,z_3),
\end{equation}
where $[d/p_1]$ denotes the integral part of $d/p_1$.
Note that $g_{s,0}$ is nothing else than the face function $f_s^I\equiv f_{s,\Delta(e_1)}$ associated with the face $\Delta(e_1)$. Also, observe that $g_{s,j}$ is a weighted homogeneous polynomial of weighted degree $d-jp_1$ with respect to the weights $(p_2,p_3)$.
In the toric chart $\mathbb{C}^3_\sigma$ with coordinates $\mathbf{y}=(y_1,y_2,y_3)$, the pull-back $\pi^*f_s$ of $f_s$ by $\pi$ is written as
\begin{equation}\label{eqpb1}
\begin{aligned}
\pi^*f_s(\mathbf{y}) 
= f_s(y_1^{p_1} y_2\, y_3^{r_1}, y_1^{p_2} y_3^{r_2}, y_1^{p_3} y_3^{r_3})
= y_1^{d} y_3^{d(R)}  
\sum_{j=0}^{[d/p_1]} y_2^{j} h_{s,j}(y_3),
\end{aligned}
\end{equation} 
where $h_{s,j}(y_3)$ is defined by  
\begin{equation}\label{rgh}
\pi^*(z_1^j g_{s,j})(\mathbf{y}) = y_1^{d} y_3^{d(R)} \times y_2^j h_{s,j}(y_3).
\end{equation} 
Note that the pull-back $\Pi^*F$ of $F$ by $\Pi$ is given by $\Pi^*F(s,\mathbf{y})=\pi^*f_s(\mathbf{y})$, and the strict transform $\widetilde{V}(F)$ of $V(F)$ by $\Pi$ is defined by the polynomial
\begin{equation}\label{dpstF}
\tilde F(s,\mathbf{y}):=\Pi^*F(s,\mathbf{y})\big/\big(y_1^{d} y_3^{d(R)}\big)=\sum_{j=0}^{[d/p_1]} y_2^{j} h_{s,j}(y_3).
\end{equation}
The defining polynomial $\tilde f_s(\mathbf{y})$ of the strict transform $\widetilde{V}(f_s)$ of $V(f_s)$ by $\pi$ identifies with the restriction of $\tilde F$ to $\{s\}\times \mathbb{C}^3_\sigma$.

To prove the theorem, we have to show that $\widetilde{V}(F)$ (respectively, $\widetilde{V}(f_s)$ for all small~$s$) is smooth and transverse to the exceptional divisors $D_\varepsilon(0)\times \hat E(Q)$ (respectively, $\hat E(Q)$) for $\varepsilon$ small enough.\footnote[2]{Note that $U\times \hat E(Q)$ and $\hat E(Q)$ are exceptional divisors if and only if $d(Q)>0$. In particular, $U\times \hat E(e_1)$ and $\hat E(e_1)$ are not exceptional divisors.} Here, by $\hat E(Q)$, we mean the divisor of $\pi$ corresponding to the vertex $Q$ (see \cite[Chap.~II, \S 1]{O3} or \cite[\S 2.5.1]{O1} for the definition).
The polynomial $h_{0,0}(y_3)$ (which appears for $s=j=0$ in the expression \eqref{eqpb1}) is given by \begin{equation}\label{a512}
h_{0,0}(y_3)=\prod_{i=1}^{\ell} (y_3-\lambda_i)^{k_i}. 
\end{equation}
Indeed, by \eqref{rgh}, $y_1^{d} y_3^{d(R)}h_{0,0}(y_3)=\pi^*(g_{0,0})(\mathbf{y}) = \pi^*f^I_0(\mathbf{y})$. Now, as $\Delta(R)=(0,a,b+p_2k)$, we have $d(R)=ar_2+br_3+kp_2r_3$, and since $f^I_0$ is weighted homogeneous of weighted degree $d$, we also have $p_2a+p_3(b+kp_2)=d$. Then, from \eqref{facfzi}, we deduce that
\begin{align*}
\pi^*f^I_0(\mathbf{y}) = y_1^{ap_2+bp_3+kp_2p_3} y_3^{ar_2+br_3+kp_2r_3}\prod_{i=1}^{\ell} (y_3-\lambda_i)^{k_i}= y_1^{d} y_3^{d(R)}\prod_{i=1}^{\ell} (y_3-\lambda_i)^{k_i}.
\end{align*} 
On the other hand, for $s\not=0$, the Newton non-degeneracy of $f_s^I$ implies that $h_{s,0}(y_3)$ has $k$ simple roots (namely, $k_i$ roots converging to $\lambda_i$ as $s\to 0$ for each $1\leq i\leq \ell$).
Pick an arbitrary small positive number $\delta$, and consider the open disc $D_\delta(\lambda_i)$ with centre $\lambda_i$ and radius $\delta$ in the $y_3$-coordinate. By taking $s$ in a sufficiently small disc $D_\varepsilon(0)$, we may assume that all the roots of $h_{s,0}(y_3)$ are contained in $D:=\bigcup_{i=1}^\ell D_\delta(\lambda_i)$ ($k_i$ roots in each disc $D_\delta(\lambda_i)$). 

\begin{lemma}\label{rem}
For any index $i$ such that $k_i\geq 2$, we have $h_{0,1}(\lambda_i)\not=0$.
\end{lemma}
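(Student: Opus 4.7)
The plan is to argue by contradiction: suppose $h_{0,1}(\lambda_i)=0$ for some $i$ with $k_i\geq 2$; I will exhibit a non-trivial analytic curve of critical points of $f$ accumulating at $\mathbf{0}$, contradicting the isolated singularity hypothesis. The first preparatory step is to unwind $h_{0,1}$ in terms of the polynomial $g_{0,1}$. Pulling back each monomial $c_\alpha z_1 z_2^{\alpha_2} z_3^{\alpha_3}$ of $z_1\,g_{0,1}$ via the toric substitution, using $P\cdot\alpha=d$ together with the weighted homogeneity of $g_{0,1}$ of $(p_2,p_3)$-degree $d-p_1$, one obtains, in the spirit of the computation that produced \eqref{a512},
\begin{equation*}
h_{0,1}(y_3)=y_3^{\,r_1-d(R)}\, g_{0,1}(y_3^{r_2},y_3^{r_3}).
\end{equation*}
Since $\lambda_i\neq 0$, the assumption $h_{0,1}(\lambda_i)=0$ therefore reduces to the single scalar vanishing $g_{0,1}(\lambda_i^{r_2},\lambda_i^{r_3})=0$.

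Next, inspired by the curve used in the proof of Lemma~\ref{lem-det}, I would introduce
\begin{equation*}
C(u):=\big(0,\; u^{p_2}\lambda_i^{r_2},\; u^{p_3}\lambda_i^{r_3}\big),\qquad u\in\mathbb{C},
\end{equation*}
which is the image under $\pi$ of the line $\{y_2=0,\,y_3=\lambda_i\}\subset\mathbb{C}^3_\sigma$ and passes non-trivially through $\mathbf{0}$ at $u=0$. The relation $p_3r_2-p_2r_3=1$ noted just after \eqref{eqpb1} yields $z_2^{p_3}-\lambda_i z_3^{p_2}\equiv 0$ along $C(u)$. The three partials of $f$ then all vanish on $C(u)$: for $\partial_{z_1}f$, restriction to $z_1=0$ is $g_{0,1}(z_2,z_3)$, and weighted homogeneity together with the contradiction hypothesis gives $g_{0,1}|_{C(u)}=u^{d-p_1}g_{0,1}(\lambda_i^{r_2},\lambda_i^{r_3})=0$; for $\partial_{z_j}f$ with $j=2,3$, restriction to $z_1=0$ equals $\partial_{z_j}f^I(0,z_2,z_3)$, which a direct differentiation of \eqref{facfzi} shows is divisible by $(z_2^{p_3}-\lambda_i z_3^{p_2})^{k_i-1}$ \textemdash\ this is the only place where $k_i\geq 2$ is used \textemdash\ and therefore vanishes on $C(u)$.

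The resulting one-parameter family of critical points of $f$ accumulating at $\mathbf{0}$ contradicts isolatedness, completing the proof. The only step requiring any real insight is spotting the curve $C(u)$ and recognising that, once its two ``easy'' partials vanish automatically from the factorisation \eqref{facfzi} combined with $k_i\geq 2$, the vanishing of the third partial $\partial_{z_1}f$ along $C(u)$ is precisely what the assumption $h_{0,1}(\lambda_i)=0$ encodes; everything else is routine bookkeeping with weighted homogeneity.
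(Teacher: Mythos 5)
Your proposal is correct and follows essentially the same path as the paper's own proof: you translate $h_{0,1}(\lambda_i)=0$ into the vanishing of $g_{0,1}$ along the curve $z_2^{p_3}=\lambda_i z_3^{p_2}$, observe (using $k_i\geq 2$ and \eqref{facfzi}) that the two remaining partials $\partial_{z_2}f$ and $\partial_{z_3}f$ already vanish along that curve, and conclude that it would be a curve of critical points of $f$ through $\mathbf{0}$, contradicting the isolated-singularity hypothesis. Your rational parametrization $C(u)=(0,u^{p_2}\lambda_i^{r_2},u^{p_3}\lambda_i^{r_3})$ is a small, clean variant of the paper's choice $(0,\lambda_i^{1/p_3}u^{p_2},u^{p_3})$ that avoids extracting a $p_3$-th root, but the underlying argument is identical.
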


\begin{proof}
This is an immediate consequence of \eqref{rgh} and the fact that in a small neighbourhood of $\mathbf{0}\in\mathbb{C}^I$, the set $A_i$ defined for any index $i$ with $k_i\geq 2$ by 
\begin{equation*}
A_i:=\{(0,z_2,z_3)\in\mathbb{C}^I\setminus\{(0,0,0)\}\, ;\, z_2^{p_3}-\lambda_i\, z_3^{p_2}=g_{0,1}(z_2,z_3)=0\}
\end{equation*}
is empty.\footnote{Similarly, note that for any $z_2,z_3$ near zero, we also have: 
\begin{enumerate}
\item 
$g_{0,1}(0,z_3)\not=0$ if $a\geq 2$; 
\item 
$g_{0,1}(z_2,0)\not=0$ if $b\geq 2$.
\end{enumerate}} 
This vacuity, in turn, follows from the fact that $f$ has an isolated singularity at~$\mathbf{0}$. Indeed, as in the proof of Lemma \ref{lem-det}, for any $i$ such that $k_i\geq 2$, the point $(\lambda_i^{1/p_3}u^{p_2}, u^{p_3})$ is a critical point of $f^I$ for any $u\in\mathbb{C}^*$. As this is not a critical point of $f=f_0$, we must have 
\begin{equation*}
0\not=\frac{\partial f_0}{\partial z_1}(0,\lambda_i^{1/p_3}u^{p_2}, u^{p_3})=g_{0,1}(\lambda_i^{1/p_3}u^{p_2}, u^{p_3}),
\end{equation*}
which implies the vacuity of $A_i$ near $\mathbf{0}\in\mathbb{C}^I$.
\end{proof}

Lemma \ref{rem} implies that if $\delta$ and $\varepsilon$ are chosen sufficiently small, then $h_{s,1}(y_3)\not=0$  for any $(s,y_3)\in D_\varepsilon(0)\times D'$, where $D'$ is the union of those $D_\delta(\lambda_i)$'s for which $k_i\geq 2$. Thus, since
\begin{equation*}
\tilde F(s,\mathbf{y})\equiv h_{s,0}(y_3)+y_2h_{s,1}(y_3) \ \mbox{modulo}\ y_2^2
\end{equation*} 
(see \eqref{dpstF}), it follows that if $D_\eta(0)$ is a sufficiently small open disc at $0$ in the $y_2$-coordinate, then we have
\begin{equation}\label{nzd}
\frac{\partial \tilde F}{\partial y_2} (s,\mathbf{y})\not=0
\end{equation} 
for any $(s,y_2,y_3)\in \mathscr{D}:=D_\varepsilon(0)\times D_\eta(0)\times D'$. This implies that $\widetilde{V}(F)$ is non-singular at any point of $\mathscr{D}$. Similarly, $\widetilde{V}(f_s)$, $s\in D_\varepsilon(0)$, is non-singular at any point of $D_\eta(0)\times D'$. The relation \eqref{nzd} also implies that the strict transform $\widetilde{V}(F)$ cannot be tangent to an exceptional divisor $D_\varepsilon(0)\times \hat E(Q)$ at a point in $\mathscr{D}$. It can only be tangent to $D_\varepsilon(0)\times \hat E(e_1)$ (which is not an exceptional divisor) and this can only happen at a point of the form $(0,0,\lambda_i)$, where $i$ is such that $k_i\geq 2$. Similarly for $\widetilde{V}(f_s)$, $s\in D_\varepsilon(0)$. (Note that by our choice of $D_\eta(0)$ and $D$, we have $\widetilde{V}(F)\cap (D_\varepsilon(0)\times \hat E(e_1))\subseteq D_\varepsilon(0)\times D_\eta(0)\times D$.)
Now, outside of $\mathscr{D}$, the smoothness and transversality of $\widetilde{V}(F)$ to the exceptional divisors follow from the classical argument developed in the proof of \cite[Chap.~III, Theorem (3.4)]{O3} which concerns the Newton non-degenerate case. Similarly for $\widetilde{V}(f_s)$, $s\in D_\varepsilon(0)$.

\begin{remark}
In the chart $(\mathbb{C}_{\sigma'},(y_1',y_2',y_3'))$ corresponding to $\sigma':=\mbox{Cone}(P,e_1,R')$, the counterpart of the factor $(y_3-\lambda_i)^{k_i}$ that appears in \eqref{a512} is given by $(1-\lambda_iy_3')^{k_i}$, so that the situation is similar in both charts. 
\end{remark}

\begin{remark}\label{rem-ltf}
The above proof shows that $\hat E(P)\cap\widetilde{V}(f_s)$ and $\hat E(P)\cap\widetilde{V}(f_0)$ are diffeomorphic smooth manifolds for all small $s$.
\end{remark}

%%%%%%%%%%%%%%%%%%%%%%%%%%%%%%%%%%%%%%%%%%%%%%%%%%%%%%%%%%%%%%%%%%%%%%%%%%%%%%%%%%%%%%%%%%%%
\section{Invariance of the monodromy zeta-function on $\mathcal{W}(P,d,\Gamma)$}\label{izf}
%%%%%%%%%%%%%%%%%%%%%%%%%%%%%%%%%%%%%%%%%%%%%%%%%%%%%%%%%%%%%%%%%%%%%%%%%%%%%%%%%%%%%%%%%%%%

We still assume that $n=3$. As an application of Theorem \ref{mp}, we give in this section a new geometrical proof of the following weak version of Milnor--Orlik's theorem: \emph{``For any $f, g \in \mathcal{W}(P,d,\Gamma)$, the monodromy zeta-functions $\zeta_{f,\mathbf{0}}(t)$ and $\zeta_{g,\mathbf{0}}(t)$ are identical''}. We recall that $\mathcal{W}(P,d,\Gamma)$ is the set of all (possibly Newton degenerate) polynomials $f\in \mathcal{W}(P,d)$ such that $\Gamma(f)=\Gamma$. We also recall that if we drop the weighted homogeneity assumption, then two polynomials with the same Newton boundary may have different monodromy zeta-functions if one of them is Newton degenerate.

Now, to show the assertion, it suffices to prove that if $f\in\mathcal{W}(P,d,\Gamma)$ and $\{f_s\}$ is a generic deformation of its coefficients such that $f_s$ is Newton non-degenerate for $s\not=0$, then the monodromy zeta-function $\zeta_{f_s,\mathbf{0}}(t)$ is independent of $s$ for all small $s$. Indeed, by a theorem of Varchenko \cite[Theorem (4.1)]{V}, we know that the monodromy zeta-function of a Newton non-degenerate function depends only on its Newton boundary.  

To simplify, as in the proof of Theorem \ref{mp}, we shall assume that $f$ is Newton degenerate only on the $1$-dimensional face $\Gamma(f)\cap \mathbb{R}^{\{2,3\}}=\Delta(e_1,f)$.
Let $\Sigma^*$ be a regular simplicial cone subdivision of $\Gamma^*(f)$ as in (the proof of) Theorem \ref{mp}, and let $\pi\colon X\to\mathbb{C}^3$ be the corresponding toric modification. By the theorem, $\pi$ is a simultaneous good resolution of the family of functions $f_s$ for all small $s$.
Put 
\begin{equation*}
E(Q,f_s):=\hat E(Q)\cap\widetilde{V}(f_s).
\end{equation*}
Then by the A'Campo formula for the monodromy zeta-function \cite[Th\'eor\`eme 3]{A}, we have
\begin{equation*}
\zeta_{f_s,\mathbf{0}}(t)=\prod_{Q\in\mathcal{V}_+(f_s)}(1-t^{d(Q,f_s)})^{-\chi(\hat E(Q,f_s)'')}
\end{equation*}
for all small $s$. Here, $\mathcal{V}_+(f_s)$ is the set of vertices $Q\in\Sigma^*$ such that $d(Q,f_s)>0$, and $\hat E(Q,f_s)''$ is defined by
\begin{equation*}%\label{rdm2}
\hat E(Q,f_s)'':=\Bigg(\hat E(Q)\Bigg\backslash \Bigg(\widetilde{V}(f_s)\cup\bigcup_{\substack{S\in\mathcal{V}_+(f_s)\\ S\not=Q}} \hat E(S)\Bigg)\Bigg)\cap\pi^{-1}(\mathbf{0}).
\end{equation*}
The notation $\chi(\hat E(Q,f_s)'')$ stands for the Euler--Poincar\'e characteristic of $\hat E(Q,f_s)''$. Note that $f_s$ being obtained from a deformation of the coefficients of $f$, the set $\mathcal{V}_+(f_s)$ is independent of $s$. 
Now, as the degenerate face of $\Gamma(f)=\Delta(P)$ only concerns the boundary of $\Delta(P)$, to prove the assertion we only have to consider $\hat E(P,f_s)''$ and show the following lemma. 
(For all the other terms $\hat E(Q,f_s)''$, $Q\not=P$, the Euler--Poincar\'e characteristic is independent of $s$ by the classical argument developed in the proofs of \cite[Chap.~III, Theorem (5.3)]{O3} or \cite[Theorem (4.1)]{V}, which correspond to the Newton non-degenerate case.)

\begin{lemma}\label{conttmzf}
For any $s$ small enough, $\chi(\hat E(P,f_s)'')=\chi(\hat E(P,f_0)'')$.
\end{lemma}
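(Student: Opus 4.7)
The strategy is to split $\chi(\hat E(P,f_s)'')$ by additivity of the (compactly supported) Euler characteristic and to show that each piece is independent of $s$. First, since $P\in\mathcal V_+(f_s)$ (because $d(P,f_s)=d>0$), the divisor $\hat E(P)$ lies entirely inside $\pi^{-1}(\mathbf 0)$, so the intersection with $\pi^{-1}(\mathbf 0)$ in the definition of $\hat E(P,f_s)''$ is automatic. Setting
\begin{equation*}
C_s:=\hat E(P)\cap\widetilde V(f_s),\qquad L:=\bigcup_{S\in\mathcal V_+(f_s)\setminus\{P\}}\bigl(\hat E(P)\cap\hat E(S)\bigr),
\end{equation*}
inclusion--exclusion gives
\begin{equation*}
\chi(\hat E(P,f_s)'')=\chi(\hat E(P))-\chi(C_s)-\chi(L)+\chi(C_s\cap L).
\end{equation*}

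The first move is to dispose of the three easier terms. Both $\chi(\hat E(P))$ and $\chi(L)$ are intrinsic to the fixed subdivision $\Sigma^*$ (and, in view of the remark preceding Lemma \ref{conttmzf}, $\mathcal V_+(f_s)$ is independent of $s$), hence are constant in $s$. The equality $\chi(C_s)=\chi(C_0)$ is exactly Remark \ref{rem-ltf}.

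The actual content is the last term $\chi(C_s\cap L)$. Since $\widetilde V(f_s)$ is a smooth surface meeting the exceptional divisors transversally (Theorem \ref{mp}), and $\hat E(P)\cap \hat E(S)$ is a $1$-dimensional toric stratum, $C_s\cap L$ is a finite collection of points, and I would count them one edge at a time. The key observation is that $e_1\notin\mathcal V_+(f_s)$ (since $d(e_1,f_s)=0$), so the degenerate edge $\hat E(P)\cap\hat E(e_1)$ is \emph{not} part of $L$. Every edge of $\hat E(P)$ appearing in $L$ thus corresponds to a cone $\mathrm{Cone}(P,S)$ whose associated Newton face $\Delta(P)\cap\Delta(S)$ is either a vertex of $\Gamma(f)$ or a non-degenerate $1$-dimensional face. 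In a toric chart containing both $P$ and $S$, the points $C_s\cap\hat E(P)\cap\hat E(S)$ are cut out by a one-variable polynomial whose coefficients depend only on monomials of $f_s$ supported on $\Delta(P)\cap\Delta(S)$; non-degeneracy (valid for all $s$ on these faces) ensures that the number of distinct roots is constant in $s$.

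\emph{Main obstacle.} The delicate edges are the two neighbours of the degenerate edge, namely $\hat E(P)\cap\hat E(R)$ and $\hat E(P)\cap\hat E(R')$, which correspond to the endpoints $\Delta(R)$ and $\Delta(R')$ of $\Delta(e_1)$. These faces are single vertices, so the face function is a monomial; using the expansion \eqref{dpstF} and Lemma \ref{rem} one reads off that the restriction of $\tilde F$ to the corresponding affine line yields the same number of simple roots for $s=0$ as for $s$ small. Putting the four contributions together yields $\chi(\hat E(P,f_s)'')=\chi(\hat E(P,f_0)'')$.
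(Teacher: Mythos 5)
Your proof takes a genuinely different route from the paper's. You use inclusion--exclusion directly on $\hat E(P)\setminus(C_s\cup L)$ and aim to show each of the four terms is constant in $s$, whereas the paper decomposes $\hat E(P)$ by the canonical toric stratification $\{\hat E(\sigma)^*\}_{\sigma\in\mathcal C(P)}$, splits $\mathcal C(P)$ into $\mathcal C(P)'$ (cones whose only $\mathcal V_+$-vertex is $P$) and $\mathcal C(P)''$, observes that $\hat E(P,f_s)''=\bigsqcup_{\sigma\in\mathcal C(P)'}\hat E(\sigma)^*\setminus E(\sigma,f_s)^*$, and then deduces constancy of $\sum_{\mathcal C(P)'}\chi(E(\sigma,f_s)^*)$ \emph{indirectly} from the constancy of $\chi(E(P,f_s))$ (Remark \ref{rem-ltf}) and of $\sum_{\mathcal C(P)''}\chi(E(\sigma,f_s)^*)$, the latter handled by the standard non-degenerate theory. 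Both proofs rely on the same two pillars (Remark \ref{rem-ltf} and non-degeneracy on the faces away from $\Delta(e_1)$), but the paper never needs to count the intersection points $C_s\cap L$ explicitly, which is the term that carries all the work in your version. Your approach is arguably more concrete, the paper's is less computation at the level of individual edges.

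One point to tighten: your treatment of the ``delicate edges'' $\hat E(P)\cap\hat E(R)$ and $\hat E(P)\cap\hat E(R')$ is misstated. For these, $\Delta(\tau)=\Delta(R)$ (resp.\ $\Delta(R')$) is a \emph{vertex} of $\Gamma(f)$, so the restriction of $\tilde F$ to the $1$-dimensional stratum $\hat E(\mathrm{Cone}(P,R))^*$ is the \emph{nonzero constant} $h_{s,0}(0)$ (the coefficient of the monomial at $\Delta(R)$, nonzero for all $s$ since the Newton boundary is fixed): there are no roots, simple or otherwise, so the contribution to $\chi(C_s\cap L)$ is zero for every $s$. This is a routine vertex-face fact and does not require Lemma \ref{rem}, which concerns $h_{0,1}$ at the \emph{degenerate} points $y_3=\lambda_i$ and has no bearing on the behaviour along $\{y_1=y_3=0\}$. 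With that correction (and with the observation that on the non-degenerate $1$-dimensional faces the number of intersection points equals the fixed lattice length because the face functions stay non-degenerate for all $s$ including $s=0$), your argument closes and gives the same conclusion as the paper.
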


\begin{proof}
Essentially this follows from the argument developed in the proof of Theorem \ref{mp}, which shows that $E(P,f_0)$ and $E(P,f_s)$ are diffeomorphic smooth manifolds for all small $s$ (see Remark \ref{rem-ltf}), so that $\chi(E(P,f_s))=\chi(E(P,f_0))$.
We shall also use the canonical toric stratification of $E(P,f_s)$ introduced in \cite[Chap.~II, \S 1 and Chap.~III, \S 5]{O3} and which is defined as follows. Let $\mathcal{C}(P)$ be the set of cones of $\Sigma^*$ having $P$ as a vertex, and let $\mathcal{V}(P)$ be the set of vertices of $\Sigma^*$ which are \emph{adjacent} to $P$.\footnote[8]{Here, we say that a vertex $Q\in\Sigma^*$ is \emph{adjacent} to $P$ if $\mbox{Cone}(P,Q)$ is a cone of $\Sigma^*$ (see \cite[\S 2.5.1]{O1}).}
For any $\sigma\in\mathcal{C}(P)$, put
\begin{equation*}
\hat E(\sigma)^*:=\bigg(\bigcap_{Q\in\mathcal{V}(\sigma)}\hat E(Q)\bigg)\bigg\backslash\bigg(\bigcup_{Q'\in\mathcal{V}(P)\setminus \mathcal{V}(\sigma)}\hat E(Q')\bigg)
\quad\mbox{and}\quad
E(\sigma,f_s)^*:=\hat E(\sigma)^*\cap\widetilde{V}(f_s),
\end{equation*}
where $\mathcal{V}(\sigma)$ is the set of vertices of $\sigma$. Note that $\hat E(\sigma)^*$ is isomorphic to $\mathbb{C}^{*(3-k)}$, where $k$ is the number of vertices of $\sigma$. Then the \emph{canonical toric stratification} of $E(P,f_s)$ is defined by the partition
\begin{equation*}%\label{cs}
E(P,f_s) = \bigsqcup_{\sigma\in\mathcal{C}(P)}E(\sigma,f_s)^*.
\end{equation*}
Let $\mathcal{C}(P)'\subseteq\mathcal{C}(P)$ be the set of cones whose all vertices, except $P$, do not belong to $\mathcal{V}_+(f_s)=\mathcal{V}_+(f_0)$, and let $\mathcal{C}(P)'':=\mathcal{C}(P)\setminus \mathcal{C}(P)'$. Then 
\begin{equation*}
\hat E(P,f_s)''= \bigsqcup_{\sigma\in\mathcal{C}(P)'}\hat E(\sigma)^* \setminus E(\sigma,f_s)^*,
\end{equation*}
while $\bigsqcup_{\sigma\in\mathcal{C}(P)''} E(\sigma,f_s)^*$ is independent of $s$.
Now, as the sums
\begin{equation*}
\chi(E(P,f_s))=\sum_{\sigma\in\mathcal{C}(P)'}\chi(E(\sigma,f_s)^*)+\sum_{\sigma\in\mathcal{C}(P)''} \chi(E(\sigma,f_s)^*)
\end{equation*}
and $\sum_{\sigma\in\mathcal{C}(P)''} \chi(E(\sigma,f_s)^*)$ are independent of $s$, so is  $\sum_{\sigma\in\mathcal{C}(P)'}\chi(E(\sigma,f_s)^*)$. It follows that 
\begin{equation*}
\chi(\hat E(P,f_s)'')=\sum_{\sigma\in\mathcal{C}(P)'}\chi(\hat E(\sigma)^*)-\chi(E(\sigma,f_s)^*)
\end{equation*}
is independent of $s$ as well. (Note that the divisor $\hat E(P)$ being compact, we do not need to take the intersection with $\pi^{-1}(\mathbf{0})$ in the above argument.)
\end{proof}

\section*{Acknowledgments} This research was supported by the Narodowe Centrum Nauki under the grant number 2023/49/B/ST1/00848.

\bibliographystyle{amsplain}

\begin{thebibliography}{10}

\bibitem{A} N. A'Campo, \textit{La fonction z\^eta d'une monodromie}, Comment. Math. Helv. \textbf{50} (1975), 233--248.

\bibitem{BS} J. Briançon and J.-P Speder, \textit{Les conditions de Whitney impliquent ``$\mu^{(*)}$ constant''}, Ann. Inst. Fourier (Grenoble) \textbf{26} (1976), no. 2, 153--163.

\bibitem{FP} T. Fukui and L. P\u{a}unescu, \textit{Modified analytic trivialization for weighted homogeneous function-germs}, J. Math. Soc. Japan \textbf{52} (2000), no. 2, 433--446. 

\bibitem{GBLP} E. Garc\'{i}a Barroso, A. Lenarcik and A. P\l oski, \textit{Characterization of non-degenerate plane curve singularities}, Univ. Iagel. Acta Math. \textbf{45} (2007), 27--36; \textbf{47} (2009), 321--322.

\bibitem{Hironaka} H. Hironaka, \textit{Resolution of singularities of an algebraic variety over a field of characteristic zero I, II}, Ann. of Math. (2) \textbf{79} (1964), 109--203; \textbf{79} (1964), 205--326.

\bibitem{K} A. G. Kouchnirenko, \textit{Poly\`edres de Newton et nombres de Milnor}, Invent. Math. \textbf{32} (1976), 1--31.

\bibitem{L2} H. Laufer, \textit{Weak simultaneous resolution for deformations of Gorenstein surface singularities}, in: Singularities, Part 2 (Arcata, Calif., 1981), pp. 1--29, Proc. Sympos. Pure Math. \textbf{40}, Amer. Math. Soc., Providence, RI, 1983.

\bibitem{L} H. Laufer, \textit{Strong simultaneous resolution for surface singularities}, in:
Complex analytic singularities (Ibaraki/Japan 1984), pp. 207--214, Adv. Stud. Pure Math. \textbf{8}, North-Holland Publishing Co., Amsterdam, 1987.

\bibitem{LMS} M. Leyton-Álvarez, H. Mourtada and M. Spivakovsky, \textit{Newton non-degenerate $\mu$-constant deformations admit simultaneous embedded resolutions}, Compos. Math. \textbf{158} (2022), no. 6, 1268--1297.

\bibitem{M} J. Milnor, \textit{Singular points of complex hypersurfaces}, Ann. of Math. Stud. \textbf{61}, Princeton Univ. Press, Princeton, NJ, Univ. of Tokyo Press, Tokyo, 1968.

\bibitem{MO} J. Milnor and P. Orlik, \textit{Isolated singularities defined by weighted homogeneous polynomials}, Topology \textbf{9} (1970), 385--393.

\bibitem{O4} M. Oka, \textit{Deformation of Milnor fiberings}, J. Fac. Sci. Univ. Tokyo Sect. IA Math. \textbf{20} (1973), 397--400; \textbf{27} (1980), no. 2, 463--464.

\bibitem{O2} M. Oka, \textit{On the weak simultaneous resolution of a negligible truncation of the Newton boundary}, in: Singularities (Iowa City, IA, 1986), pp. 199--210, Contemp. Math. \textbf{90}, Amer. Math. Soc., Providence, RI, 1989.

\bibitem{O3} M. Oka, \textit{Non-degenerate complete intersection singularity}, Hermann, Paris, 1997.

\bibitem{O1} M. Oka, \textit{On $\mu$-Zariski pairs of links}, J. Math. Soc. Japan \textbf{75} (2023), no.~4, 1227--1259.

\bibitem{P} A. P\l oski, \textit{Milnor number of a plane curve and Newton polygons}, in: Effective methods in algebraic and analytic geometry (Bielsko-Bia\l a, 1997),
Univ. Iagel. Acta Math. \textbf{37} (1999), 75--80.

\bibitem{S} N. Shepherd-Barron, \textit{Some questions on singularities in 2 and 3 dimensions}, Thesis, Math. Institute, Univ. of Warwick, 1980.

\bibitem{T} B. Teissier, \textit{Cycles évanescents, sections planes et conditions de Whitney}, in: Singularités à Cargèse (Cargèse, 1972), pp. 285--362, Astérisque \textbf{7} \& \textbf{8}, Soc. Math. France, Paris, 1973.

\bibitem{Teissier} B. Teissier, \textit{Résolution simultanée I, II}, in: Séminaire sur les singularités des surfaces (Palaiseau 1976--77), pp. 71--146, Lecture Notes in Math. \textbf{777}, Springer-Verlag, Berlin, Heidelberg, New York, 1980.

\bibitem{V} A. N. Varchenko, \textit{Zeta-function of monodromy and Newton's diagram}, Invent. Math. \textbf{37} (1976), no.~3, 253--262.

\end{thebibliography}

\end{document}